\makeatletter
\IfFileExists{manuscript/math_macros.tex}{%
  \def\input@path{{manuscript/}}%
  \def\RootBuild{1}%
}{}
\makeatother
\RequirePackage{fix-cm}
\documentclass[twoside]{article}

\usepackage[preprint]{paperstyle}
\ifdefined\fancyfootinit
  \fancyfootinit{\renewcommand{\footruleskip}{0pt}}
\fi
\usepackage{array}
\usepackage{longtable}
\usepackage[T1]{fontenc}
\usepackage{iftex}
\ifPDFTeX
  \usepackage[utf8]{inputenc}
\fi
\usepackage{microtype}
\usepackage[numbers,sort&compress]{natbib}
\usepackage{amsmath}
\usepackage{amssymb}
\usepackage{bbm}
\usepackage{amsthm}
\usepackage{mathtools}
\usepackage{bm}
\usepackage[ruled,vlined]{algorithm2e}

\SetAlgoNlRelativeSize{0}
\SetNlSty{AlgorithmLineNumberStyle}{}{}
\usepackage{graphicx}
\usepackage{capt-of}
\usepackage{stfloats}
\usepackage{booktabs}
\usepackage{xcolor}
\definecolor{darkforest}{rgb}{0.13,0.55,0.13}

\usepackage{wrapfig}
\usepackage{threeparttable}

\usepackage[
    colorlinks=true,
    linkcolor=blue,
    citecolor=blue,
    urlcolor=blue
]{hyperref}
\hypersetup{
    pdftitle={First-Order Primal-Dual Repairs for Scalable Branch and Bound},
    pdfauthor={Francesco Pivi, Jiachang Liu, Andrea Lodi}
}
\usepackage[capitalise,noabbrev]{cleveref}
\usepackage{enumitem}

\newcommand{\R}{\mathbb{R}}

\newcommand{\Z}{\mathbb{Z}}

\newcommand{\bbR}{\mathbb{R}}
\newcommand{\bbN}{\mathbb{N}}

\newcommand{\ba}{\bm{a}}
\newcommand{\bb}{\bm{b}}

\newcommand{\bI}{\bm{I}}

\newcommand{\br}{\bm{r}}

\newcommand{\bu}{\bm{u}}

\newcommand{\bw}{\bm{w}}
\newcommand{\bx}{\bm{x}}
\newcommand{\by}{\bm{y}}
\newcommand{\bz}{\bm{z}}

\newcommand{\bbeta}{\bm{\beta}}

\newcommand{\bgamma}{\bm{\gamma}}

\newcommand{\bnu}{\bm{\nu}}

\newcommand{\bzeta}{\bm{\zeta}}

\newcommand{\bA}{\bm{A}}

\newcommand{\bK}{\bm{K}}

\newcommand{\bW}{\bm{W}}

\newcommand{\calB}{\mathcal{B}}
\newcommand{\calC}{\mathcal{C}}
\newcommand{\calD}{\mathcal{D}}

\newcommand{\calJ}{\mathcal{J}}

\newcommand{\calL}{\mathcal{L}}

\newcommand{\calN}{\mathcal{N}}

\DeclareMathOperator*{\argmin}{arg\,min}

\DeclareMathOperator{\dom}{dom}

\DeclareMathOperator{\prox}{prox}

\newcommand{\ip}[2]{\left\langle #1,#2\right\rangle}

\newcommand{\norm}[1]{\left\lVert #1\right\rVert}

\newcommand{\vzero}{\bm{0}}

  {\par\medskip\noindent\textbf{#1.}\itshape}{\par\medskip}

\newtheorem{theorem}{Theorem}

\newtheorem{lemma}{Lemma}

\theoremstyle{remark}
\newtheorem{remark}{Remark}

\begin{document}
\onecolumn
\raggedbottom
\suppressfloats[t]

\runningtitle{First-Order Primal-Dual Repairs for Scalable Branch and Bound}
\runningauthor{Pivi, Liu, Lodi}

\noindent\begin{minipage}{\textwidth}

\papertitle{First-Order Primal-Dual Repairs for Scalable Branch and Bound}

\newcommand*\samethanks[1][\value{footnote}]{\footnotemark[#1]}

\paperauthor{ Francesco Pivi$^{1,3*}$ \hspace{3em} Jiachang Liu$^{2,4,5,6*}$ \hspace{3em} Andrea Lodi$^{2,3}$ }
\par\vspace{10pt}

{\small
\paperaddress{
    $^{1}$University of Bologna, Italy\\[4pt]
    $^{2}$School of Operations Research and Information Engineering, Cornell University, Ithaca, USA\\[4pt]
    $^{3}$Jacobs Technion--Cornell Institute, Cornell Tech and Technion--IIT, New York, USA\\[4pt]
    $^{4}$Center for Data Science for Enterprise and Society, Cornell University, Ithaca, USA\\[4pt]
    $^{5}$Department of Computer Science and Engineering, Pennsylvania State University, USA\\[7pt]
    $^{6}$Institute for Computational and Data Sciences, Penn State University, USA
}}
\end{minipage}\par\vspace{14pt}

\begingroup
\renewcommand{\thefootnote}{\fnsymbol{footnote}}
\footnotetext[1]{These authors contributed equally.}
\endgroup

\setlength{\parskip}{7pt}
\begin{abstract}

Mixed-integer programs have found applications in many real-world problems but remain difficult to solve at scale.
To improve scalability, there has been increasing interest in incorporating first-order methods as subroutines for solving node relaxations, with success primarily on problems involving specific types of constraints.
However, when more general constraints are considered, first-order methods cannot be directly applied because, at finite iterations, they typically produce infeasible primal and dual solutions, preventing their direct integration into branch-and-bound.
To overcome this limitation, we introduce repair procedures that extract valid primal and dual bounds from infeasible first-order iterates.
These bounds enable node pruning, incumbent updates, and early stopping at any iteration.
Empirically, our approach achieves one to two orders of magnitude speedups when solving real-world instances (with another order of magnitude speedup when running on GPUs) and certifies optimality on instances that are difficult for commercial solvers.

\end{abstract}

\section{Introduction}
\label{sec:introduction}

Many real-world applications require modeling with discrete variables and domain-specific constraints.
In medicine, operations research, and finance, such problems include creating risk scoring systems under interpretability and clinical constraints~\citep{liu2025scalable,liu2022fasterrisk,ustun2019learning}, selecting facility locations under capacity, cost, and routing constraints~\citep{fischetti2016benders,prodhon2014survey}, and allocating capital subject to risk, liquidity, and diversification constraints~\citep{bonami2009exact,kolm201460}.
When life, time, and money are at great risk, we want to solve these problems to high quality or even optimality.

\begin{figure}[t]
\centering
\includegraphics[width=0.68\linewidth]{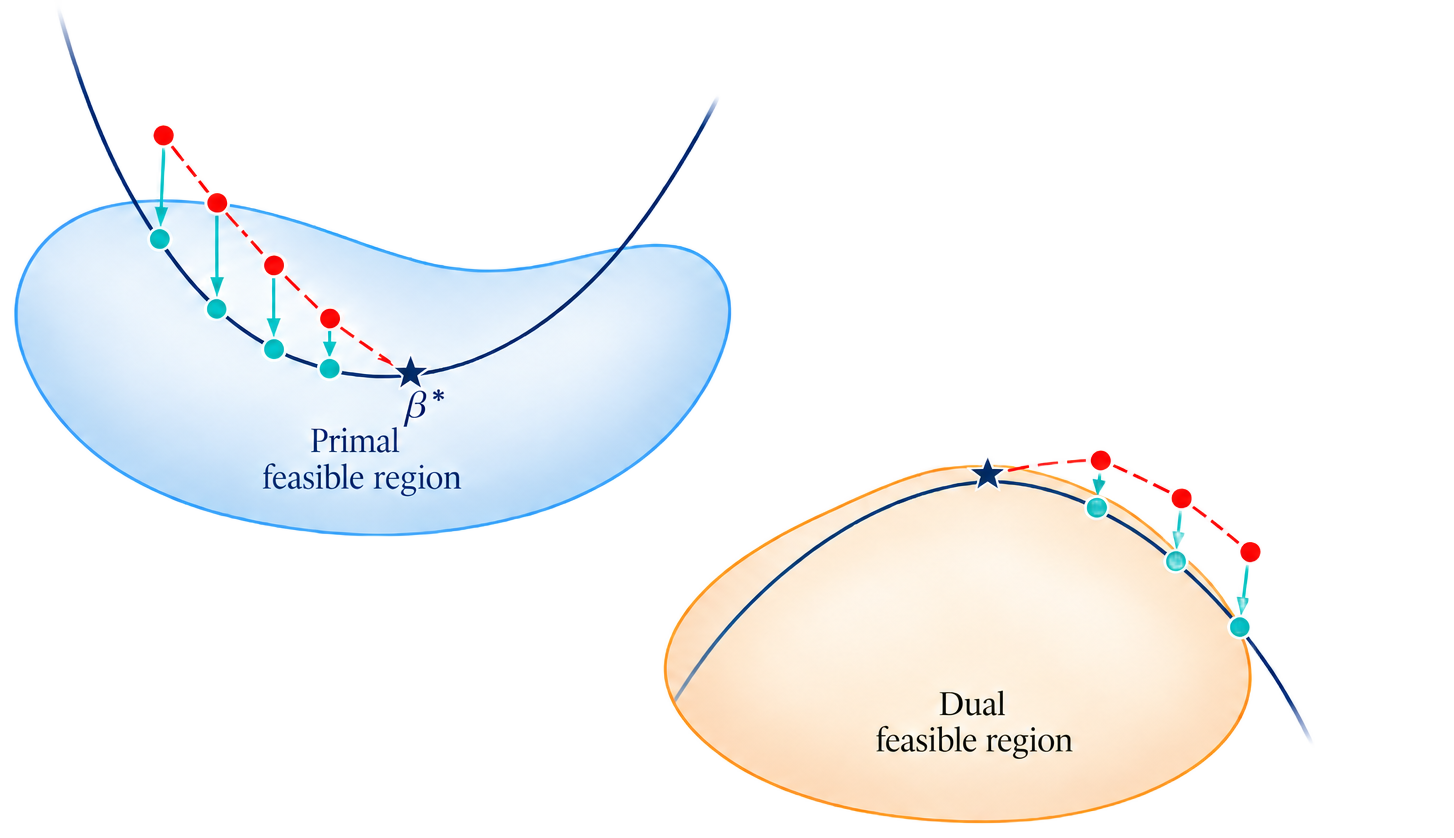}
\caption{Primal and dual-variable repair idea.}
\label{fig:graphical-abstract}
\end{figure}

All of the above problems can be formulated as mixed-integer nonlinear programs (MINLPs).
In this work, we aim to solve the following MINLP:
\begin{equation}
\begin{aligned}
\mathrm{OPT} :=
\inf_{\bbeta \in \R^{p}} \quad & f(\bA \bbeta) + g(\bbeta)\\
\text{s.t.}\quad
& h_l(\bbeta) \leq 0, && l \in [m], \\
& \beta_j \in \Z, && j \in \calJ,
\end{aligned}
\label{eq:minlp}
\end{equation}
where $\bA \in \bbR^{n
\times p}$ is the feature matrix,
$\bbeta$ is a vector containing both continuous and discrete variables (with
$\calJ$ indexing the discrete ones), $[m] := \{1,\ldots,m\}$, and $f$, $g$, and $h_l$'s are the loss function, regularization function, and constraint function, respectively.
All functions are assumed to be proper, closed, and convex.
We further assume that $f$ is $L_f$-smooth, and that the required proximal, projection, and Fenchel-conjugate/support-function evaluations are efficient.
In this work, we only focus on inequality constraints.

Although all functions in Problem~\eqref{eq:minlp} are convex, the integrality constraint $\beta_j\in\mathbb Z$, $j\in\calJ$ makes the problem nonconvex and contain many local minima.
Certifying global optimality therefore requires exploring the discrete structure via a branch-and-bound (BnB) framework.
However, existing BnB methods struggle to scale because they cannot handle~\emph{a broad class of constraints} and~\emph{a large number of variables} simultaneously:
commercial solvers based on outer approximation can handle many types of constraints but stall when the number of variables grows,
whereas recent customized solvers based on first-order methods (FOMs) can scale gracefully with the variable dimension but are limited to only specific types of constraints, such as cardinality and uniform box constraints.

Our goal is to develop a generic FOM-based BnB solver capable of handling a broad class of constraints.
Although it is not new for FOMs to solve constrained convex problems, when we use them to solve node relaxations in the context of BnB, we face two key challenges:
(1) FOMs may only yield infeasible primal and dual iterates, which prevent us from pruning nodes and updating the incumbent (best feasible solution).
(2) FOMs can take too many iterations to converge to high-precision solutions, making the use of FOMs computationally inefficient.

To overcome these challenges, we propose two fixes.
For the dual iterates, we construct safe lower bounds by either repairing the dual equality violation or taking account of this violation explicitly into the dual objective function.
For the primal iterates, we restore feasibility by pulling the primal iterates toward interior points.
Figure~\ref{fig:graphical-abstract} illustrates these primal and dual repairs.
With the help of these techniques, we can prune nodes, update the incumbent, and do early stopping of FOMs, at any iteration.
To the best of our knowledge, this is the first work to enable FOMs to be used for general MINLPs.

To validate the effectiveness of our method, we evaluate our FOM-based BnB solver on both synthetic and real-world datasets, including tasks of sparse regression and classification, facility allocation, and portfolio optimization.
Compared with state-of-the-art commercial solvers, we achieve up to two orders of magnitude speedups and also close the optimality gap on difficult datasets, which is out of reach for existing methods.
Lastly, our algorithm can be easily accelerated on GPUs, bringing another order of magnitude speedup on large-scale instances.

In summary, our contributions are
\begin{enumerate}
    \item We propose methods to extract valid primal and dual bounds from infeasible primal and dual iterates, enabling FOMs to be used in BnB for general MINLP problems.
    \item For our valid dual bounds, we show that they converge at the same rate as that of the residual term of the underlying FOM.
    Our theoretical analysis applies to a broad class of FOMs.
    \item Across both synthetic and real-world benchmarks, our FOM-based BnB framework is up to two orders of magnitude faster than commercial solvers, with an additional order-of-magnitude speedup when running on GPUs.
\end{enumerate}

\section{Related Works}
\label{sec:related-works}

\paragraph{Lower Bound for Pruning.}
In order to prune nodes in BnB, we need to compute the lower bound at each node.
To do this, commercial solvers rely on outer approximations, which linearize nonlinear objectives and constraints with cutting planes \citep{duran1986outer,bonami2008algorithmic,coey2020outer}.
However, this iterative process of adding cuts and solving linear programs can become costly at scale.
To make the computation more efficient, FOMs have been employed to solve node relaxations and
derive dual bounds \citep{hazimeh2022sparse,liu2026gpu,liu2025scalable, naik2017embedded,stellato2018embedded}.
However, existing work can only handle
specific types of constraints.
Our work extends this capability by allowing FOMs to handle a broad class of constraints.

\paragraph{Primal and Dual Feasibility.}
A key step in deriving valid primal and dual bounds is to first obtain feasible primal and dual solutions.
When the entire problem can be written in an unconstrained composite form, $f+g$, past works~\citep{rockafellar1970convex,hazimeh2022sparse,liu2026gpu,fercoq2015mind,ndiaye2017gap} use the proximal operator on $g$ and the gradient of $f$ to obtain feasible primal and dual iterates, respectively.
However, when multiple constraints are present, FOMs~\citep{boyd2011distributed,chambolle2011first,ryuyin2022largescale} typically produce infeasible primal and dual iterates.
To overcome this challenge, we introduce repair methods that allow us to obtain valid primal and dual bounds.
Moreover, unlike the recent radial methods~\citep{renegar2016efficient,grimmer2018radial}, which require changing the underlying algorithm, our method is a plug-and-play step that leaves the backbone algorithm unchanged.

\paragraph{Convergence Rates.}
Recent work has studied the convergence rate of valid lower bounds in the setting of unconstrained composite optimization and proximal gradient methods.
When additional constraints are present, primal-dual splitting methods are more appropriate; however prior works have focused on the convergence rate of the primal-dual gap and fixed-point residual of the Lagrangian formulation~\cite{chambolle2011first,davis2015convergence,dunner2016primal}.
This leaves open the question of how quickly the valid lower bound converges.
In this work, we show this quantity converge at least at the same rate as the residual term.

\paragraph{GPU-accelerated Optimization.}
There is a growing interest in designing GPU-friendly optimization methods and software, by leveraging matrix-vector products, elementwise computations, and proximal updates, while avoiding costly linear-system solves and matrix factorizations \citep{applegate2021practical,lu2025cupdlp,lin2025pdcs,liu2026gpu}.
Among primal-dual splitting methods, we choose the Condat--V\~u algorithm as our base building block because it avoids solving linear systems.
Moreover, when computing the dual bounds of quadratic loss function, we also avoid performing the costly matrix factorization step by exploiting the Fenchel-Young inequality.

\section{Preliminaries}
\label{sec:problem-preliminaries}

\subsection{Branch-and-bound and Node Relaxation}
\label{subsec:convex-relaxation}

We solve Problem~\eqref{eq:minlp} to optimality via a BnB framework, which recursively partitions the feasible region, organizes the resulting subproblems as nodes of a search tree, computes a lower bound for each node, and prunes nodes whose lower bounds exceed the objective value of the incumbent (best feasible solution found so far).
Once all nodes have been processed, the incumbent becomes the global optimal solution.

More specifically, each partitioned region is defined by box constraints on the discrete variables
\begin{equation}
\calB_{\calN}
:=
\left\{
\bbeta\in\R^{p}:
\beta_j \in [l_j,u_j],
\, j\in\calJ
\right\},
\label{eq:node-domain}
\end{equation}
where $\calN$ is the node index, and $l_j\in\R\cup\{-\infty\}$ and $u_j\in\R\cup\{+\infty\}$ denote the lower and upper bounds on $\beta_j$.
For notational simplicity, we omit the node index $\calN$ on $l_j$ and $u_j$ wherever it is clear from context.
From each parent node, we produce two child nodes by tightening one of the intervals in the box constraints.
Specifically, the intervals for the two child nodes are $[c_j,u_j]$ and $[l_j, c_j]$ with $l_j < c_j < u_j$.

The effectiveness of the BnB method is heavily based on the ability of \emph{pruning} nodes in the tree, without resorting to full enumeration. A node -- and the entire subtree rooted on it -- can be safely pruned if it is shown not to contain any improving solution with respect to the incumbent.
For that purpose, one needs to compute at each node a lower bound, which is done by dropping the integrality requirements.
The resulting node relaxation can be written as
\begin{equation}
\begin{aligned}
p_{\calN}^\star \; := \;
\inf_{\bbeta \in \bbR^p} \quad & f(\bA \bbeta) + g(\bbeta)\\
\text{s.t.} \quad & \bbeta \in \calB_{\calN}, \\
& h_l(\bbeta) \leq 0, \; l \in [m]. \\
\end{aligned}
\label{eq:node-relaxation}
\end{equation}

\subsection{First-order Primal-dual Splitting Method}

To solve the node relaxation, we first rewrite Problem~\eqref{eq:node-relaxation} in an equivalent form.
Let $\calC_0:=\calB_{\calN}$ and $ \calC_l:=\left\{\bbeta\in\R^p:h_l(\bbeta)\leq 0\right\}$ for $l\in[m]$, and $\delta_{\calC}$ be the indicator function of a set $\calC$,~\textit{i.e.}, $ \delta_{\calC}(\bgamma) = 0 $ if $\bgamma \in \calC$ and $\delta_{\calC}(\bgamma)=+\infty$ otherwise.

Then, Problem~\eqref{eq:node-relaxation} is equivalent to
\begin{align}
p_{\calN}^\star = \inf_{\bbeta} p(\bbeta)
\label{problem:relaxation_in_composite_form}
\end{align}
where
\begin{align}
p(\bbeta) &:= f(\bA\bbeta)+g(\bbeta)+h(\bK\bbeta),
\label{eq:relaxation_in_primal_composite_form_objective_function} \\
\bK &:=
\begin{bmatrix}\bI_p\\\vdots\\\bI_p\end{bmatrix}
\in\R^{(m+1)p\times p},\\
h(\bgamma_0,\ldots,\bgamma_m) &:=\sum_{l=0}^m\delta_{\calC_l}(\bgamma_l).
\end{align}

A FOM can solve such generalized composite forms by tackling its saddle-point formulation
\begin{align}
\label{problem:relaxation_in_saddle_point_formulation}
p_{\calN}^\star = \inf_{\bbeta} \; \sup_{\bnu}\calL(\bbeta,\bnu),
\end{align}
where
\begin{align}
\calL(\bbeta,\bnu) :=f(\bA\bbeta)+g(\bbeta)+\ip{\bnu}{\bK\bbeta}-h^*(\bnu),
\end{align}
and $h^*(\bnu) := \sup_{\bgamma} \bnu^\top \bgamma - h(\bgamma)$ is known as the Fenchel conjugate of $h$.

Under the assumptions that the proximal operators of both $g$ and $h$ are easy to evaluate, we can solve Problem~\eqref{problem:relaxation_in_saddle_point_formulation} using the primal-dual splitting method.
One such example is the Condat-V\~u algorithm, which performs the following primal and dual updates at each iteration, respectively:
\begin{align}
(\bbeta^+,\bnu^+) = T\left(\bbeta,\bnu\right),
\label{eq:condat-vu-algorithm}
\end{align}
where
\begin{align*}
\bbeta^+
&=\prox_{\sigma g}\!\Bigl(
\bbeta-\sigma\bigl(\bA^\top\nabla f(\bA\bbeta) + \bK^\top\bnu\bigr)\Bigr),
\\
\bnu^+
&=\prox_{\tau h^*}\!\Bigl(
\bnu+\tau\bK(2\bbeta^+ -\bbeta)\Bigr),
\end{align*}
$\sigma>0$ and $\tau>0$ are the primal and dual step sizes, respectively, and $\prox$ is the proximal operator defined as $\prox_{\sigma g}(\bgamma) := \argmin_{\bbeta} \frac{1}{2} \lVert \bgamma - \bbeta \rVert_2^2 + \sigma g(\bbeta)$.

The fixed-point iteration in Equation~\eqref{eq:condat-vu-algorithm} converges to an optimal solution in the $M$-induced normed space, where
\begin{align}
M :=
\begin{bmatrix}
\sigma^{-1}\bI_p & -\bK^\top\\
-\bK & \tau^{-1}\bI_{(m+1)p}
\end{bmatrix},
\end{align}
$\ip{\bw}{\bgamma}_M:=\bw^\top M \bgamma$, and $\norm{\bw}_M:=\sqrt{\ip{\bw}{\bw}_M}$.

For a primer on the primal-dual splitting method and the Condat-V\~u algorithm, please see Appendix~\ref{sec:base-algorithm}.

\subsection{Safe Lower Bounds in Branch and Bound}

Approximate optimality with strict tolerances can be slow to reach, and valid lower bounds on  $p_{\calN}^\star$ might be not guaranteed. This is because their intermediate iterates may violate primal or dual feasibility. This is problematic for three reasons.

First, FOMs can produce an infeasible primal iterate $\bbeta^t$, which we cannot plug into $p(\bbeta^t)$ to obtain a finite value.
Second, even when FOMs produce a feasible primal iterate $\bbeta^t$, its objective value satisfies $p(\bbeta^t)\geq p_{\calN}^\star$ and therefore does not generally provide a valid lower bound.
Third, by weak duality, any feasible dual point provides a valid lower bound, even if it is not optimal.
However, the dual points constructed from intermediate iterates in FOMs may violate the dual constraints.
Their objective values are therefore not guaranteed to lie below $p_{\calN}^\star$ and cannot be used directly as lower bounds for node pruning.

To prune nodes at each iteration $t$, we need a lower bound $d^t$ that is \emph{safe}, i.e., $d^t\leq  p_{\calN}^\star$ and \emph{fast} to compute.
Computing such safe lower bounds in presence of general convex and structured constraints is one of the key motivations in this paper.
 Likewise, we also would like to obtain~\emph{safe upper bounds} $p^t$ ($p_{\calN}^\star \leq p^t $), even when FOMs produce infeasible $\bbeta^t$ so that whenever $p^t \leq U^{\text{incum}}$, where $U^{\text{incum}}$ is the objective value of the incumbent, we can do early stopping for FOMs to save computations.

\section{Methodology}
\label{sec:methodology}

\subsection{Constrained Dual Composite Form}

To derive safe lower bounds, we first rewrite Problem~\eqref{problem:relaxation_in_composite_form} in its dual composite form by introducing two additional dual variables $\bzeta$ and $\bu$, namely
\begin{align}
d^\star
:=
\left\{
\begin{aligned}
\sup_{\bzeta,\bu,\bnu}\quad & d(\bzeta,\bu,\bnu) \\
\mathrm{s.t.}\quad          & V(\bzeta,\bu,\bnu)=\vzero
\end{aligned}
\right\},
\label{problem:dual_formulation}
\end{align}
where
\begin{align}
d(\bzeta,\bu,\bnu) :&= -f^*(\bzeta) -g^*(\bu) -h^*(\bnu), \\
h^*(\bnu_0, \dots, \bnu_m) :&= \sum_{l=0}^m \sigma_{\calC_l}(\bnu_l), \\
V(\bzeta, \bu, \bnu) :&= \bA^\top \bzeta +\bu +\bK^\top\bnu,
\end{align}
and $\sigma_{\calC}(\bu) := \sup_{\bx\in\calC}\ip{\bu}{\bx}$ is known as the support function and is equal to the Fenchel conjugate of the indicator function,~\textit{i.e.}, $\delta_{\calC}^*(\bu) = \sigma_{\calC}(\bu)$.

By the weak duality, every feasible tuple $(\bzeta, \bu, \bnu)$ yields a safe lower bound through $d(\bzeta,\bu,\bnu)\leq d^\star\leq p_{\calN}^\star$.
However, it is difficult for FOMs to produce a sequence of dual iterates that satisfy the linear equality $V(\bzeta, \bu, \bnu) = 0$~\emph{exactly}.
If we evaluate $d(\bzeta, \bu, \bnu)$ at an infeasible dual iterate, this does not produce a safe lower bound.
We next consider how to compute safe lower bounds under this limitation of FOMs.

\subsection{Safe Lower Bound Computation}

\begin{algorithm}[tbp]
\caption{Compute Safe Lower bound}
\label{alg:compute_safe_lower_bound}
\LinesNumbered
\SetCommentSty{AlgorithmCommentStyle}
\DontPrintSemicolon
\KwIn{Primal and dual iterates $(\bbeta^t, \bnu^t)$, (optional) convex and compact set $\calD$ such that $\dom p \subseteq \calD$. }
\KwOut{Safe lower bound $LB^t$ for Problem~\eqref{eq:node-relaxation}.}
\tcp{Perform one-step Condat-V\~u update}
$\bbeta^+, \bnu^+ \gets T (\bbeta^t, \bnu^t)$ \label{alg_line:condat_vu_update}\;
$\bzeta^+ \gets \nabla f\left(\bA \bbeta^t \right)$ \label{alg_line:dual_variable_zeta_update}\;
$\bu^+ \gets \frac{1}{\sigma}\left(\bbeta^t-\bbeta^+\right) - \bA^\top\bzeta^+-\bK^\top\bnu^t$ \label{alg_line:dual_variable_u_update}\;
\If{$\calD$ is not provided}{
    \tcp{Method 1: repair dual variable}
    $\tilde{\bu} \gets - \bA^\top \bzeta^+ - \bK^\top \bnu^+$ \label{alg_line:repair_dual_variable_u}\;
    $LB^t \gets d(\bzeta^+, \tilde{\bu}, \bnu^+)$\label{alg_line:compute_safe_lower_bound_repair}\;
}
\Else{
    \tcp{{Method 2: penalize dual constraint violation; $\sigma_{\calD} (\cdot)$ is the support function over $\calD$}}
    $LB^t \gets d(\bzeta^+, \bu^+, \bnu^+) - \sigma_{\calD} \left(-V\left(\bzeta^+, \bu^+, \bnu^+ \right) \right)$ \label{alg_line:compute_safe_lower_bound_penalize}\;
}
\KwRet{$LB^t$}\;

\end{algorithm}

We propose two alternative ways to calculate safe lower bounds, which are shown in Algorithm~\ref{alg:compute_safe_lower_bound}.
At the start (Lines~\ref{alg_line:condat_vu_update}-~\ref{alg_line:dual_variable_u_update}), Algorithm~\ref{alg:compute_safe_lower_bound} produces an auxiliary dual tuple $\left(\bzeta^+, \bu^+, \bnu^+ \right)$.
By construction, we can show that each individual dual variable is feasible for its dual function,~\textit{i.e.}, $\bzeta^+ \in \dom f^*$, $\bu^+ \in \dom g^*$, and $\bnu^+ \in \dom h^*$.
However, this construction does not guarantee $V(\bzeta^+, \bu^+, \bnu^+) = 0$, so we cannot use $d(\bzeta^+, \bu^+, \bnu^+)$ to compute a safe lower bound.
We fix this issue by either repairing the dual variable $\bu$ (Lines~\ref{alg_line:repair_dual_variable_u}--\ref{alg_line:compute_safe_lower_bound_repair}) or adding a penalty term based on the constraint violation (Line~\ref{alg_line:compute_safe_lower_bound_penalize}). The repaired bound is finite only if $g^*(\tilde{\bu})<+\infty$; otherwise it is $-\infty$.

The safe lower bounds in Algorithm~\ref{alg:compute_safe_lower_bound} actually enjoy several nice properties.
In the following theorem, we show that
1) these are valid lower bounds,~\textit{i.e.}, $\text{LB}_{\text{repair}}^t \leq d^\star$ and $\text{LB}_{\text{violation}}^t \leq d^\star$;
2) if $(\bbeta^t, \bnu^t)$ converge to an optimal solution, then the safe lower bounds converge to $d^\star$ as $t \rightarrow +\infty$;
3) their convergence rates are dominated by the residual error $\lVert (\bbeta^{t+1}, \bnu^{t+1}) - (\bbeta^t, \bnu^t)\rVert_M$.

\begin{theorem}
\label{theorem:convergence_rate}
    Let $\{\bz^t\}_{t \in \bbN}$, where $\bz^t = \left(\bbeta^t, \bnu^t \right)$, be a sequence of iterates produced by a primal-dual splitting method in solving Problem~\eqref{problem:relaxation_in_saddle_point_formulation}.
    Let $e^t := \norm{\br^t}_M := \lVert (\bI - T) \left( \bz^t \right) \rVert_M$ be the residual error at each iteration.
    Let $s:=\sigma^{-1}-\tau\norm{\bK}_2^2>0$, and choose $\sigma,\tau>0$ so that $s\geq L_f \|\bA\|_2^2/2$.
    Suppose there exists some $R_{\bz} > 0$ and some optimal solution $\bz^\star = \left( \bbeta^\star, \bnu^\star \right)$ such that $\lVert \bz^t - \bz^* \rVert_M \leq R_{\bz}$ for $t \in \bbN$.

    (Assumption 1) If there exists a $L_{g^*} > 0$ such that $g^*$ is $L_{g^*}$-Lipschitz, then we have
    \begin{equation*}
\begin{aligned}
0&\leq\Delta_{\text{repair}}\left(\bz^t\right) :=d^\star-\text{LB}_{\text{repair}}^t\\
&\leq\left(R_{\bz}
+\frac{L_{g^*}+\lVert\bbeta^\star\rVert_2}{\sqrt{\sigma}}\right)e^t
+\frac{L_f\lVert\bA\rVert_2^2}{2s}\left(e^t\right)^2.
\end{aligned}
\end{equation*}

    (Assumption 2) If there exists a nonempty compact and convex set $\calD$ such that $\dom p \subseteq \calD$ and let $R_{\bbeta}:= \sup_{\bbeta \in \calD} \lVert \bbeta - \bbeta^\star \rVert_2$, then we have
    \begin{equation*}
\begin{aligned}
0&\leq\Delta_{\text{violation}}\left(\bz^t\right) :=d^\star-\text{LB}_{\text{violation}}
^t\\
&\leq\left(R_{\bz}+\frac{R_{\bbeta}}{\sqrt{\sigma}}\right)e^t
+\frac{L_f\lVert\bA\rVert_2^2}{2s}\left(e^t\right)^2.
\end{aligned}
\end{equation*}

\end{theorem}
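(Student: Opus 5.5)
The plan is to compare the lower bound at the auxiliary tuple $(\bzeta^+,\bu^+,\bnu^+)$ directly with $p_{\calN}^\star$. The comparison goes through the Fenchel--Young \emph{equalities} that each component satisfies by construction. After that, the error terms are collected into an $M$-inner product with the residual $\br^t$.

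\textbf{Step 1 (optimality conditions of the step).} Write $\bz^+=(\bbeta^+,\bnu^+)=T(\bz^t)$, so that $\br^t=\bz^t-\bz^+$. The prox optimality conditions give the following.
\begin{itemize}
\item From $\bbeta^+=\prox_{\sigma g}(\cdot)$ we get $\bu^+\in\partial g(\bbeta^+)$, hence $g^*(\bu^+)=\ip{\bu^+}{\bbeta^+}-g(\bbeta^+)$.
\item From $\bnu^+=\prox_{\tau h^*}(\cdot)$ we get $\bnu^+\in\partial h(\bw)$ with $\bw:=\bK(2\bbeta^+-\bbeta^t)+\tau^{-1}(\bnu^t-\bnu^+)\in\dom h$. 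Hence $h^*(\bnu^+)=\ip{\bnu^+}{\bw}-h(\bw)$ with $h(\bw)=0$.
\item Trivially $f^*(\bzeta^+)=\ip{\bzeta^+}{\bA\bbeta^t}-f(\bA\bbeta^t)$.
\end{itemize}
These also show the domain memberships claimed in the text. A direct computation gives
\[
V(\bzeta^+,\bu^+,\bnu^+)=\sigma^{-1}(\bbeta^t-\bbeta^+)-\bK^\top(\bnu^t-\bnu^+),
\]
which is the first block of $M\br^t$. Likewise, $\bw-\bK\bbeta^+=\tau^{-1}(\bnu^t-\bnu^+)-\bK(\bbeta^t-\bbeta^+)$ is the second block of $M\br^t$.

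\textbf{Step 2 (key inequality).} Fix any $\bbeta$ and substitute the equalities of Step 1 into $d(\bzeta^+,\bu^+,\bnu^+)+\ip{V}{\bbeta}$, writing $V$ for $V(\bzeta^+,\bu^+,\bnu^+)$. This yields
\[
f(\bA\bbeta^t)+\ip{\bzeta^+}{\bA(\bbeta-\bbeta^t)}+g(\bbeta^+)+\ip{\bu^+}{\bbeta-\bbeta^+}+\ip{\bnu^+}{\bK\bbeta-\bw}.
\]
Regroup this as
\[
\bigl[f(\bA\bbeta^t)+\ip{\bzeta^+}{\bA(\bbeta^+-\bbeta^t)}\bigr]+g(\bbeta^+)+\ip{V}{\bbeta-\bbeta^+}+\ip{\bnu^+}{\bK\bbeta^+-\bw}.
\]
Next apply three bounds.
\begin{itemize}
\item $L_f$-smoothness bounds the bracket below by $f(\bA\bbeta^+)-\tfrac{L_f\norm{\bA}_2^2}{2}\norm{\bbeta^+-\bbeta^t}_2^2$.
\item The saddle property of $(\bbeta^\star,\bnu^\star)$ gives $f(\bA\bbeta^+)+g(\bbeta^+)+\ip{\bnu^\star}{\bK\bbeta^+}-h^*(\bnu^\star)\ge p_{\calN}^\star$.
\item Fenchel--Young with $h(\bw)=0$ gives $h^*(\bnu^\star)\ge\ip{\bnu^\star}{\bw}$.
\end{itemize}
Together these give
\[
d(\bzeta^+,\bu^+,\bnu^+)+\ip{V}{\bbeta}\ \ge\ p_{\calN}^\star+\ip{V}{\bbeta-\bbeta^\star}-\ip{\bz^\star-\bz^+}{\br^t}_M-\tfrac{L_f\norm{\bA}_2^2}{2}\norm{\bbeta^+-\bbeta^t}_2^2 .
\]
This uses $\ip{V}{\bbeta^\star-\bbeta^+}+\ip{\bnu^\star-\bnu^+}{\bw-\bK\bbeta^+}=\ip{\bz^\star-\bz^+}{M\br^t}$, up to sign bookkeeping that I will check carefully.

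\textbf{Step 3 (the two cases).} The error terms are controlled as follows.
\begin{itemize}
\item Since $\norm{\bz^+-\bz^\star}_M\le\norm{\bz^t-\bz^\star}_M\le R_{\bz}$ by Fejér monotonicity (or one can simply use $\bz^t$ and absorb the difference), the inner-product term is at most $R_{\bz}e^t$.
\item The inequality $\norm{\bw}_M^2\ge s\norm{\bw_{\bbeta}}_2^2$, where $\bw_{\bbeta}$ denotes the primal block, follows from a Young/Schur argument on $M$. It gives $\norm{\bbeta^+-\bbeta^t}_2^2\le (e^t)^2/s$. This produces the quadratic term with coefficient $L_f\norm{\bA}_2^2/(2s)$, matching the statement.
\item I need $\norm{V}_2\le e^t/\sqrt{\sigma}$. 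Since $V$ is the first block of $M\br^t$, I expect this to follow from $\norm{P_1M^{1/2}}_2\le\sigma^{-1/2}$, where $P_1$ is the projection onto the primal block. This norm bound is the step I expect to be most delicate, and it may need to borrow part of $s$.
\end{itemize}
For Assumption 2, note that $\text{LB}_{\text{violation}}^t=\inf_{\bbeta\in\calD}\{d+\ip{V}{\bbeta}\}$. Taking the infimum in Step 2 and using $|\ip{V}{\bbeta-\bbeta^\star}|\le R_{\bbeta}\norm{V}_2$ gives the bound.

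For Assumption 1, note that $V(\bzeta^+,\tilde\bu,\bnu^+)=0$ and $\tilde\bu-\bu^+=-V$. Lipschitzness then gives $g^*(\tilde\bu)\le g^*(\bu^+)+L_{g^*}\norm{V}_2$. Taking $\bbeta=\vzero$ in Step 2 leaves $\ip{V}{-\bbeta^\star}$, which is bounded by $\norm{\bbeta^\star}_2\norm{V}_2$.

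In both cases, nonnegativity of the gap is weak duality, because the repaired tuple is dual feasible and the penalized bound is an infimum of Lagrangian values over $\calD\supseteq\dom p$.
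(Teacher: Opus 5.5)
Your argument is correct and is essentially the paper's proof. Once the sign is fixed (the cross term enters as $+\ip{\bz^\star-\bz^+}{\br^t}_M$), your Step~2 at $\bbeta=\bbeta^\star$ is exactly Lemma~\ref{lemma:one_step_Bregman_estimate}(b), since $B(\bzeta^+,\bu^+,\bnu^+)=d^\star-d(\bzeta^+,\bu^+,\bnu^+)-\ip{V}{\bbeta^\star}$ and $d^\star=p_{\calN}^\star$ at a saddle point, while your choices $\bbeta=\vzero$ and $\inf_{\bbeta\in\calD}$ reproduce Lemma~\ref{lemma:surrogate_lower_bounds}. With the correct sign you do not even need Fej\'er monotonicity, because $\ip{\bz^\star-\bz^+}{\br^t}_M=\ip{\bz^\star-\bz^t}{\br^t}_M+(e^t)^2\ge-R_{\bz}e^t$, and the step you flagged as delicate is immediate: $\norm{P_1M^{1/2}}_2^2=\norm{P_1MP_1^\top}_2=\sigma^{-1}$ exactly, with no need to borrow from $s$.
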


\begin{remark}
Algorithm~\ref{alg:compute_safe_lower_bound} and Theorem~\ref{theorem:convergence_rate} are applicable for a broad class of primal-dual splitting methods.
For example, the primal and dual iterates $(\bbeta^t, \bnu^t)$ can be produced by the vanilla Condat-V\~u algorithm, the optimized Halpern method~\citep{park2022exact}, or together with the restart technique~\citep{lu2407restarted}.
Moreover, the safe lower bounds share the same convergence rate as the residual error $e^t$.
Thus, if the base algorithm producing $\{\bz^t\}$ is the vanilla Condat-V\~u algorithm, the safe lower bounds converge sublinearly in $O(\frac{1}{\sqrt{t}})$; if the base algorithm is the optimized Halpern method with restart, then the safe lower bounds often converge linearly in a local neighborhood of $\bz^\star$.
\end{remark}

We prove Theorem~\ref{theorem:convergence_rate} by first stating several lemmas.
All proofs of the lemmas can be found in Appendix~\ref{app:methodology-proofs}.

\begin{lemma}[Valid Lower Bounds]
\label{lemma:lower_bounds_are_valid}
Let $\bbeta^\star$ and $(\bzeta^\star,\bu^\star,\bnu^\star)$ be a pair of primal--dual optimal solutions.
Define the conjugate Bregman terms as
\begin{align}
B_f(\bzeta)
&:=f^*(\bzeta)-f^*(\bzeta^\star)
- \ip{\bA\bbeta^\star}{\bzeta-\bzeta^\star},\\
B_g(\bu)
&:=g^*(\bu)-g^*(\bu^\star)
-\ip{\bbeta^\star}{\bu-\bu^\star},\\
B_h(\bnu)
&:=h^*(\bnu)-h^*(\bnu^\star)
-\ip{\bK\bbeta^\star}{\bnu-\bnu^\star},\\
B(\bzeta,\bu,\bnu)
&:=B_f(\bzeta)+B_g(\bu)+B_h(\bnu).
\end{align}
Then, these terms are nonnegative.
For Algorithm~\ref{alg:compute_safe_lower_bound}, the repaired lower bound satisfies
\begin{equation*}
\begin{aligned}
\Delta_{\text{repair}}\left(\bz^t\right) =B(\bzeta^+,\tilde{\bu},\bnu^+)\geq0.
\end{aligned}
\end{equation*}
Under Assumption 2 in Theorem~\ref{theorem:convergence_rate}, the violation-based lower bound satisfies
\begin{equation*}
\begin{aligned}
&\Delta_{\text{violation}}\left(\bz^t\right) \\
=&B(\bzeta^+,\bu^+,\bnu^+) + \sigma_{\calD-\bbeta^\star}\!\left(-V(\bzeta^+,\bu^+,\bnu^+)\right)\geq0,
\end{aligned}
\end{equation*}
where $\calD-\bbeta^\star:=\{\bbeta-\bbeta^\star:\bbeta\in\calD\}$.
\end{lemma}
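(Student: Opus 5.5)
The plan is to exploit the KKT conditions linking $\bbeta^\star$ and $(\bzeta^\star,\bu^\star,\bnu^\star)$, and then obtain each identity by pure algebra. I assume strong duality holds, so $d^\star=p_{\calN}^\star=d(\bzeta^\star,\bu^\star,\bnu^\star)$, which is implicit in the statement. The optimality conditions then give $\bA\bbeta^\star\in\partial f^*(\bzeta^\star)$, $\bbeta^\star\in\partial g^*(\bu^\star)$, $\bK\bbeta^\star\in\partial h^*(\bnu^\star)$, together with dual feasibility $V(\bzeta^\star,\bu^\star,\bnu^\star)=\vzero$. Nonnegativity of $B_f,B_g,B_h$ is then immediate from the subgradient inequality for the convex functions $f^*,g^*,h^*$. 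If a dual argument lies outside the domain, the corresponding term is $+\infty$, which is consistent with $LB=-\infty$.

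For the repaired bound, write
\[
d^\star-d(\bzeta^+,\tilde{\bu},\bnu^+)=\bigl[f^*(\bzeta^+)-f^*(\bzeta^\star)\bigr]+\bigl[g^*(\tilde{\bu})-g^*(\bu^\star)\bigr]+\bigl[h^*(\bnu^+)-h^*(\bnu^\star)\bigr].
\]
Subtracting the three linear terms turns this into $B(\bzeta^+,\tilde{\bu},\bnu^+)$ plus
\[
\ip{\bA\bbeta^\star}{\bzeta^+-\bzeta^\star}+\ip{\bbeta^\star}{\tilde{\bu}-\bu^\star}+\ip{\bK\bbeta^\star}{\bnu^+-\bnu^\star}
=\ip{\bbeta^\star}{V(\bzeta^+,\tilde{\bu},\bnu^+)-V(\bzeta^\star,\bu^\star,\bnu^\star)}.
\]
Both $V$ terms vanish: the first by the construction of $\tilde{\bu}$ in Line~\ref{alg_line:repair_dual_variable_u}, the second by dual feasibility. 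Hence $\Delta_{\text{repair}}=B(\bzeta^+,\tilde{\bu},\bnu^+)\ge 0$.

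For the violation bound, the same expansion with $\bu^+$ in place of $\tilde{\bu}$ gives
\[
d^\star-d(\bzeta^+,\bu^+,\bnu^+)=B(\bzeta^+,\bu^+,\bnu^+)+\ip{\bbeta^\star}{V^+},\qquad V^+:=V(\bzeta^+,\bu^+,\bnu^+).
\]
Adding the penalty term, we need
\[
\ip{\bbeta^\star}{V^+}+\sigma_{\calD}(-V^+)=\sigma_{\calD-\bbeta^\star}(-V^+).
\]
This follows from the translation identity $\sigma_{\calD-\bbeta^\star}(\bw)=\sup_{\bbeta\in\calD}\ip{\bw}{\bbeta}-\ip{\bw}{\bbeta^\star}$ with $\bw=-V^+$. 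Since $\bbeta^\star\in\dom p\subseteq\calD$, we have $\vzero\in\calD-\bbeta^\star$, so $\sigma_{\calD-\bbeta^\star}\ge0$. Compactness of $\calD$ makes the support function finite.

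The main obstacle is justifying the optimality conditions, specifically strong duality and attainment of a dual optimum with $V=\vzero$. The lemma's hypothesis that a primal--dual optimal pair exists essentially supplies these, and I would invoke the standard Fenchel--Rockafellar duality for the composite form~\eqref{problem:relaxation_in_composite_form}. Everything else is bookkeeping.
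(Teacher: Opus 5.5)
Your proposal is correct and follows essentially the route the lemma's statement points to (the appendix proof itself is not shown in the source). The optimality conditions give $\bA\bbeta^\star\in\partial f^*(\bzeta^\star)$, $\bbeta^\star\in\partial g^*(\bu^\star)$ and $\bK\bbeta^\star\in\partial h^*(\bnu^\star)$, so the Bregman terms are nonnegative; linearity of $V$ together with $V(\bzeta^\star,\bu^\star,\bnu^\star)=\vzero$ reduces the linear corrections to $\ip{\bbeta^\star}{V}$, which vanishes for the repaired tuple and, in the violation case, merges with $\sigma_{\calD}$ through the translation identity, with $\vzero\in\calD-\bbeta^\star$ giving nonnegativity.
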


\begin{lemma}[Surrogate Lower Bounds]
\label{lemma:surrogate_lower_bounds}
Let $\bbeta^\star$ and $B$ be as in Lemma~\ref{lemma:lower_bounds_are_valid}, and let $(\bzeta^+,\bu^+,\bnu^+)$ be the tuple constructed by Algorithm~\ref{alg:compute_safe_lower_bound} at $\bz^t$.
Under Assumption 1 in Theorem~\ref{theorem:convergence_rate},
\begin{equation}
\begin{aligned}
&\Delta_{\text{repair}}\left(\bz^t\right) \\
\leq & B(\bzeta^+, \bu^+,\bnu^+)
+C_{\text{repair}} \norm{V(\bzeta^+,\bu^+,\bnu^+)}_2,
\end{aligned}
\end{equation}
where $C_{\text{repair}} := L_{g^*}+\norm{\bbeta^\star}_2$.

Under Assumption 2 in Theorem~\ref{theorem:convergence_rate},
\begin{equation}
\begin{aligned}
&\Delta_{\text{violation}}\left(\bz^t\right)\\
\leq &B(\bzeta^+,\bu^+,\bnu^+) + R_{\bbeta}\norm{V(\bzeta^+,\bu^+,\bnu^+)}_2,
\end{aligned}
\end{equation}
where $R_{\bbeta}:=\sup_{\bbeta\in\calD}\norm{\bbeta-\bbeta^\star}_2$.
\end{lemma}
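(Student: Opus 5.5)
The plan is to start from the exact gap identities of Lemma~\ref{lemma:lower_bounds_are_valid} and compare each of them with the surrogate quantity $B(\bzeta^+,\bu^+,\bnu^+)$. Write $V^+ := V(\bzeta^+,\bu^+,\bnu^+)$. Because $(\bzeta^\star,\bu^\star,\bnu^\star)$ is dual optimal, $V(\bzeta^\star,\bu^\star,\bnu^\star)=\vzero$. The repaired variable satisfies $\tilde{\bu} = -\bA^\top\bzeta^+ - \bK^\top\bnu^+ = \bu^+ - V^+$, so $\tilde{\bu}-\bu^+ = -V^+$.

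For the repair bound, Lemma~\ref{lemma:lower_bounds_are_valid} gives $\Delta_{\text{repair}}(\bz^t) = B(\bzeta^+,\tilde{\bu},\bnu^+)$. The $f$ and $h$ Bregman terms are identical to those in $B(\bzeta^+,\bu^+,\bnu^+)$, so only the $g$-term differs. We compute
\begin{equation*}
B_g(\tilde{\bu}) - B_g(\bu^+) = g^*(\tilde{\bu}) - g^*(\bu^+) - \ip{\bbeta^\star}{\tilde{\bu}-\bu^+}.
\end{equation*}
Under Assumption 1 the first difference is at most $L_{g^*}\norm{\tilde{\bu}-\bu^+}_2 = L_{g^*}\norm{V^+}_2$. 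By Cauchy--Schwarz the inner product term is at most $\norm{\bbeta^\star}_2\norm{V^+}_2$. Summing gives the claimed bound with $C_{\text{repair}} = L_{g^*}+\norm{\bbeta^\star}_2$. One subtlety needs care: $\bu^+$ must lie in $\dom g^*$ so that $B_g(\bu^+)$ is finite. The paper asserts this by construction. Moreover, a globally Lipschitz $g^*$ is finite everywhere, so no issue arises here.

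For the violation bound, Lemma~\ref{lemma:lower_bounds_are_valid} gives
\begin{equation*}
\Delta_{\text{violation}}(\bz^t) = B(\bzeta^+,\bu^+,\bnu^+) + \sigma_{\calD-\bbeta^\star}(-V^+).
\end{equation*}
It then suffices to bound the support function directly:
\begin{equation*}
\sigma_{\calD-\bbeta^\star}(-V^+) = \sup_{\bbeta\in\calD}\ip{-V^+}{\bbeta-\bbeta^\star} \le \sup_{\bbeta\in\calD}\norm{\bbeta-\bbeta^\star}_2\norm{V^+}_2 = R_{\bbeta}\norm{V^+}_2,
\end{equation*}
again by Cauchy--Schwarz. $R_{\bbeta}$ is finite because $\calD$ is compact.

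The argument is therefore short. The only real obstacle is bookkeeping: confirming that the identity $\tilde{\bu}-\bu^+=-V^+$ holds exactly with the definitions of Algorithm~\ref{alg:compute_safe_lower_bound}, in particular that $\tilde{\bu}$ uses $\bnu^+$ while $V^+$ also uses $\bnu^+$. It also requires that the constants $g^*(\bu^\star)$ and the linear terms at the optimum cancel identically when the two $B$'s are subtracted. Both follow directly from the definitions, and no property of the iteration map $T$ is needed at this stage.
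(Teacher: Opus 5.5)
Your proposal is correct and follows essentially the same route as the paper. You start from the exact gap identities of Lemma~\ref{lemma:lower_bounds_are_valid} and use $\tilde{\bu}-\bu^+=-V(\bzeta^+,\bu^+,\bnu^+)$, so that only the $g$-Bregman term changes; that change is bounded by $L_{g^*}\norm{V}_2+\norm{\bbeta^\star}_2\norm{V}_2$, and $\sigma_{\calD-\bbeta^\star}(-V)\leq R_{\bbeta}\norm{V}_2$ by Cauchy--Schwarz. Your finiteness remark also holds without Assumption~1: $\bu^+\in\partial g(\bbeta^+)$ by the prox optimality condition behind Line~\ref{alg_line:dual_variable_u_update}, so $\bu^+\in\dom g^*$ in both cases.
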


\begin{lemma}[One-step Bregman Estimate]
\label{lemma:one_step_Bregman_estimate}
Define
\begin{align}
\bz^{+}:=(\bbeta^+,\bnu^+), \, \br^t:=\bz^t-\bz^{+}, \,\br_{\bbeta}^t:=\bbeta^t-\bbeta^+.
\end{align}
Let $L:=L_f\norm{\bA}_2^2$. Then, the following estimates hold:
\begin{enumerate}[label=(\alph*)]
\item The dual equality violation satisfies
\begin{equation}
\norm{V(\bzeta^+,\bu^+,\bnu^+)}_2
\leq\frac{1}{\sqrt{\sigma}}\norm{\br^t}_M.
\label{eq:one-step-violation-estimate}
\end{equation}
\item The conjugate Bregman terms satisfy
\begin{equation}
\begin{aligned}
&B(\bzeta^+,\bu^+,\bnu^+)\\
\leq & \frac12\Bigl(
\norm{\bz^t-\bz^\star}_M^2-\norm{\bz^{+}-\bz^\star}_M^2 -\norm{\br^t}_M^2\Bigr) \\
&+\frac{L}{2}\norm{\br_{\bbeta}^t}_2^2.
\end{aligned}
\label{eq:one-step-bregman-estimate}
\end{equation}
\item If $\norm{\bz^t-\bz^\star}_M \leq R_{\bz}$, then
\begin{equation}
B(\bzeta^+,\bu^+,\bnu^+)
\leq R_{\bz} e^t+\frac{L_f \norm{A}_2^2}{2s} \left(e^t \right)^2.
\label{eq:one-step-bregman-residual-estimate}
\end{equation}
\end{enumerate}
\end{lemma}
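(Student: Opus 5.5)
The plan is to prove the three parts in order. Each rests on two facts: the optimality (inclusion) conditions of the two prox steps in $T$, and the algebraic identity that the dual equality violation is exactly the first block of $M\br^t$.

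\textbf{Part (a).} The primal prox step gives
$\bu^+=\sigma^{-1}(\bbeta^t-\bbeta^+)-\bA^\top\bzeta^+-\bK^\top\bnu^t\in\partial g(\bbeta^+)$. Substituting into $V$,
\begin{equation*}
V(\bzeta^+,\bu^+,\bnu^+)=\sigma^{-1}\br^t_{\bbeta}-\bK^\top(\bnu^t-\bnu^+)=(M\br^t)_{\bbeta},
\end{equation*}
the $\bbeta$-block of $M\br^t$. For any unit vector $\bx$, Cauchy--Schwarz in the $M$-inner product gives
\begin{equation*}
\ip{\bx}{(M\br^t)_{\bbeta}}=\ip{(\bx,\vzero)}{\br^t}_M\le\norm{(\bx,\vzero)}_M\norm{\br^t}_M=\sigma^{-1/2}\norm{\br^t}_M .
\end{equation*}
Taking the supremum over $\bx$ yields \eqref{eq:one-step-violation-estimate}.

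\textbf{Part (b).} I bound each conjugate Bregman term by a linear pairing, using a subgradient of the conjugate at the new point.

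For $g$: since $\bbeta^+\in\partial g^*(\bu^+)$, convexity of $g^*$ gives $B_g(\bu^+)\le\ip{\bbeta^+-\bbeta^\star}{\bu^+-\bu^\star}$.

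For $h$: the dual prox step gives $\by^+:=\bK(2\bbeta^+-\bbeta^t)-\tau^{-1}(\bnu^+-\bnu^t)\in\partial h^*(\bnu^+)$. Hence $B_h(\bnu^+)\le\ip{\by^+-\bK\bbeta^\star}{\bnu^+-\bnu^\star}$.

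For $f$: I use $\bA\bbeta^t\in\partial f^*(\bzeta^+)$ together with the $1/L_f$-strong convexity of $f^*$. This gives $B_f(\bzeta^+)\le\ip{\bA\bbeta^t-\bA\bbeta^\star}{\bzeta^+-\bzeta^\star}-\frac{1}{2L_f}\norm{\bzeta^+-\bzeta^\star}_2^2$. I then split $\bA\bbeta^t=\bA\bbeta^++\bA\br^t_{\bbeta}$ and apply Young's inequality to the cross term, which leaves $\ip{\bA(\bbeta^+-\bbeta^\star)}{\bzeta^+-\bzeta^\star}+\frac{L}{2}\norm{\br^t_{\bbeta}}_2^2$.

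Summing the three bounds, I use dual feasibility of the optimum, $\bA^\top\bzeta^\star+\bu^\star+\bK^\top\bnu^\star=\vzero$. I also write $\by^+-\bK\bbeta^\star=\bK(\bbeta^+-\bbeta^\star)+\tau^{-1}\br^t_{\bnu}-\bK\br^t_{\bbeta}$. With these, all pairings collapse to $\ip{\bz^+-\bz^\star}{\br^t}_M$. The polarization identity
\begin{equation*}
2\ip{\bz^+-\bz^\star}{\bz^t-\bz^+}_M=\norm{\bz^t-\bz^\star}_M^2-\norm{\bz^+-\bz^\star}_M^2-\norm{\br^t}_M^2
\end{equation*}
then gives \eqref{eq:one-step-bregman-estimate}.

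I expect this bookkeeping to be the main obstacle. The signs of the $\bK$ cross terms must match the off-diagonal blocks $-\bK,-\bK^\top$ of $M$ exactly. If they do not, I would recheck the $\by^+$ inclusion and the sign convention $\br^t=\bz^t-\bz^+$.

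\textbf{Part (c).} Set $a=\norm{\bz^t-\bz^\star}_M\le R_{\bz}$, $b=\norm{\bz^+-\bz^\star}_M$ and $e=e^t$. The triangle inequality gives $b\ge|a-e|$, so $a^2-b^2-e^2\le 2ae-2e^2$. For the remaining term, expanding $\br^\top M\br$ gives
\begin{equation*}
\br^\top M\br\ge(\sigma^{-1}-\tau\norm{\bK}_2^2)\norm{\br_{\bbeta}}_2^2=s\norm{\br_{\bbeta}}_2^2,
\end{equation*}
so $\norm{\br^t_{\bbeta}}_2^2\le e^2/s$. Combining with (b) gives $B\le R_{\bz}e+\bigl(\tfrac{L}{2s}-1\bigr)e^2$. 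Dropping the $-e^2$ term gives \eqref{eq:one-step-bregman-residual-estimate}.
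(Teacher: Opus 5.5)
Your proposal is correct and almost certainly follows the paper's route, though the appendix proof is not shown here. The prox optimality conditions give $\bu^+\in\partial g(\bbeta^+)$ and $\by^+\in\partial h^*(\bnu^+)$; the three conjugate subgradient inequalities (strong convexity of $f^*$ plus Young for the smooth part) collapse via $V(\bzeta^\star,\bu^\star,\bnu^\star)=\vzero$ to $\ip{\bz^+-\bz^\star}{\br^t}_M+\frac{L}{2}\norm{\br^t_{\bbeta}}_2^2$; and polarization, the triangle inequality and $\br^\top M\br\ge s\norm{\br_{\bbeta}}_2^2$ finish (b) and (c). The one point to state explicitly is that the Cauchy--Schwarz step in (a) needs $M\succeq 0$, which follows from $s>0$ via the same Young-inequality computation you use in (c).
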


Equipped with these useful lemmas, we are now ready to prove Theorem~\ref{theorem:convergence_rate}.

\begin{proof}[Proof of Theorem~\ref{theorem:convergence_rate}]
The non-negativity of $\Delta_{\text{repair}} \left( \bz^t \right)$ and $\Delta_{\text{violation}} \left( \bz^t \right)$ follows from Lemma~\ref{lemma:lower_bounds_are_valid}.

For the upper bounds, combining Lemma~\ref{lemma:surrogate_lower_bounds} and Lemma~\ref{lemma:one_step_Bregman_estimate} (Parts (a) and (c)) yields
\begin{equation*}
\begin{aligned}
&\Delta_{\text{repair}}(\bz^t)\\
\leq & B(\bzeta^+,\bu^+,\bnu^+)\\
& +\bigl(L_{g^*}+\norm{\bbeta^\star}_2\bigr) \norm{V(\bzeta^+,\bu^+,\bnu^+)}_2\\
\leq & R_{\bz}e^t +\frac{L_f\norm{\bA}_2^2}{2s}(e^t)^2 +\frac{L_{g^*}+\norm{\bbeta^\star}_2}{\sqrt{\sigma}}e^t\\
= & \left( R_{\bz} +\frac{L_{g^*}+\norm{\bbeta^\star}_2}{\sqrt{\sigma}} \right) e^t +\frac{L_f\norm{\bA}_2^2}{2s}(e^t)^2,
\end{aligned}
\end{equation*}
and
\begin{equation*}
\begin{aligned}
&\Delta_{\text{violation}}(\bz^t)\\
\leq & B(\bzeta^+,\bu^+,\bnu^+) + R_{\bbeta}\norm{V(\bzeta^+,\bu^+,\bnu^+)}_2\\
\leq & R_{\bz}e^t
+\frac{L_f\norm{\bA}_2^2}{2s}(e^t)^2
+\frac{R_{\bbeta}}{\sqrt{\sigma}}e^t\\
= &\left(R_{\bz}+\frac{R_{\bbeta}}{\sqrt{\sigma}}\right)e^t +\frac{L_f\norm{\bA}_2^2}{2s}(e^t)^2.
\end{aligned}
\end{equation*}
\end{proof}

\subsection{Matrix-Free Fenchel Conjugate of Quadratic Loss and Beyond}

Although we now have a way to compute a safe lower bound, it is also important to compute it efficiently.
A crucial step is in evaluating $f^*(\bzeta)$, which poses  two potential challenges.
One challenge is that evaluating $f^*(\bzeta)$ could be costly; the other challenge is that it may be difficult to derive a closed-form formula for $f^*$.

Let us take the quadratic loss function as an example.
Suppose $f$ has the following form:
\[
f(\by)=\tfrac12\by^\top\bW\by-\bb^\top\by,
\qquad \bW\in\mathbb R^{n\times n},\quad \bW\succeq0.
\]
Its Fenchel conjugate $f^*$ can be written as follows:
\begingroup
\renewcommand{\arraystretch}{1.2}
\[
f^*(\bzeta)
=
\left\{\begin{array}{@{}l@{\;\;}l@{}}
\displaystyle
\frac12(\bzeta+\bb)^\top\bW^\dagger(\bzeta+\bb),
&
\bzeta+\bb\in\operatorname{Range}(\bW),
\\[6pt]
+\infty,
&
\bzeta+\bb\notin\operatorname{Range}(\bW),
\end{array}\right.
\]
\endgroup
where $\bW^\dagger$ is the pseudoinverse.
Although the closed-form formula exists, finding the pseudoinverse requires doing matrix factorization (or equivalently solving a linear system), which has complexity $O(n^3)$ and is therefore unscalable.

However, we can actually compute the Fenchel conjugate without doing matrix factorization.
The insight lies in exploiting Line~\ref{alg_line:dual_variable_zeta_update} in Algorithm~\ref{alg:compute_safe_lower_bound} and the Fenchel--Young inequality~\citep[Theorem~23.5]{rockafellar1970convex}, which tells us that
\begin{equation}
\label{eq:compute_f*_using_f}
    f^*(\bzeta) = \ip{\bzeta}{\by} - f(\by) \quad \text{if} \quad \bzeta = \nabla f(\by).
\end{equation}
Thus, we do not need to do any matrix factorization, nor we do even need to know or derive the closed-form formula for $f^*$ when computing safe lower bounds.
In the special case of the quadratic loss function mentioned at the beginning, the Fenchel conjugate is simply $f^*(\bzeta) = \frac{1}{2} \by^\top \bW \by$.
This is an important computational trick to keep in mind as we demonstrate its scalability advantage in the experimental section.

\subsection{Early Termination of First-order Method}
\label{sec:Early_Termination_of_First_order_Method}

\begin{algorithm}[tbp]
\caption{Compute Safe Upper bound}
\label{alg:compute_safe_upper_bound}
\LinesNumbered
\SetCommentSty{AlgorithmCommentStyle}
\DontPrintSemicolon
\KwIn{Primal iterate $\bbeta^t\in\dom(f\circ\bA)\cap\dom g$ and a node-feasible anchor $\ba_{\calN}\in\dom p$. }
\KwOut{Safe upper bound $UB^t$ for Problem~\eqref{eq:node-relaxation}.}
\For{$l=0,\ldots,m$}{
    \tcp{Find the minimum convex combination parameter so that $l$-th constraint is obeyed; see Appendix~\ref{app:parallel-repair} for closed-form solutions for various standard constraints}
    $\alpha_l \gets \min \{\alpha \in [0, 1]:  \left(1-\alpha \right) \bbeta^t + \alpha \ba_{\calN}  \in \calC_l \}$\;
}
\tcp{Find the minimum convex combination parameter so that all constraints are obeyed; }
$\alpha \gets \max \left( \{ \alpha_l \}_{l=0}^m \right)$\;
$\bbeta^{\alpha} \gets \left(1-\alpha \right) \bbeta^t + \alpha \ba_{\calN}$\;
$UB^t \gets f(\bA \bbeta^{\alpha}) + g(\bbeta^{\alpha})$\;
\KwRet{$UB^t$}\;
\end{algorithm}

As mentioned in Section~\ref{subsec:convex-relaxation}, we can terminate the FOM early whenever $LB^t \geq U^{\text{incum}}$, where $U^{\text{incum}}$ is the value of the incumbent~\citep{liang2020early}.
This is because $p_{\calN}^\star \geq LB^t \geq U^{\text{incum}}$, so no solution in that partitioned region can improve the incumbent.
Thus, we should stop solving the relaxation and do pruning.

However, when $U^{\text{incum}} > LB^t$, the safe lower bound offers little help in accelerating the overall computation.
Instead, what becomes useful is to obtain a~\emph{safe upper bound}, $UB^t$.
Whenever $U^{\text{incum}} > UB^t $, we can also terminate the FOM early.
This is because $UB^t \geq p_{\calN}^\star$ implies $U^{\text{incum}} > p_{\calN}^\star$, so the relaxation is not strong enough to prune the node.
Therefore, we could stop solving this relaxation and branch, potentially delaying an incumbent improvement until later in the tree.
The next section reports the resulting savings in FOM iterations.

Algorithm~\ref{alg:compute_safe_upper_bound} shows the details of how to obtain safe upper bounds in presence of infeasible primal iterates.
A repaired point updates the mixed-integer incumbent only if it also satisfies integrality; otherwise, its objective value bounds only the continuous node relaxation from above.
Under additional assumptions of Appendix~\ref{app:primal-convergence},
we can show that $\bbeta^t\to\bbeta^\star$ implies $UB^t\to p_{\calN}^\star$.
Together, $LB^t$ and $UB^t$ provide a two-sided early termination rule.

\section{Experiments}
\label{sec:experiment}

In this experimental section, we address four questions:
a) what advantages do safe lower and upper bounds bring to running a first-order method when solving node relaxations?
b) how much computational saving can the matrix-free conjugate evaluation provide?
c) how fast does our customized FOM-based BnB solver compare with commercial MIP solvers?
d) how do primal and dual feasibility repairs affect branch and bound running time?

In Appendix~\ref{app:additional-experiments}, we also provide additional results on node-relaxation benchmarks, BnB experiments on synthetic datasets, ablation study of incorporating safe lower and upper bounds, and comparison of $LB_{\text{repair}}$ and $LB_{\text{violation}}$ on synthetic node relaxations.

\begin{figure}[t]
    \centering
\includegraphics[width=0.60\linewidth]{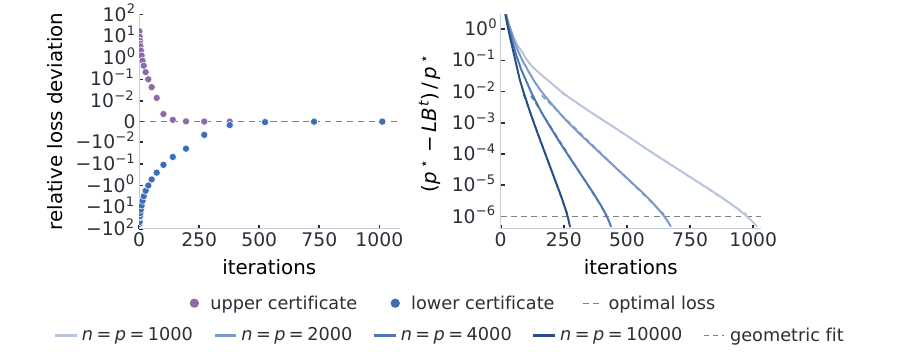}
    \caption{
        Left: sampled safe lower and upper bounds for synthetic sparse linear regression with $n=p=1000$, shown as relative gaps from the optimal loss, which is reset to $0$;
        Right: convergence of dual gaps across various problem sizes.
    }
    \label{fig:relaxation-feasibility}
\end{figure}

\begin{figure*}[!b]
    \centering
    \includegraphics[width=0.95\linewidth,trim=0 10pt 0 4pt,clip]{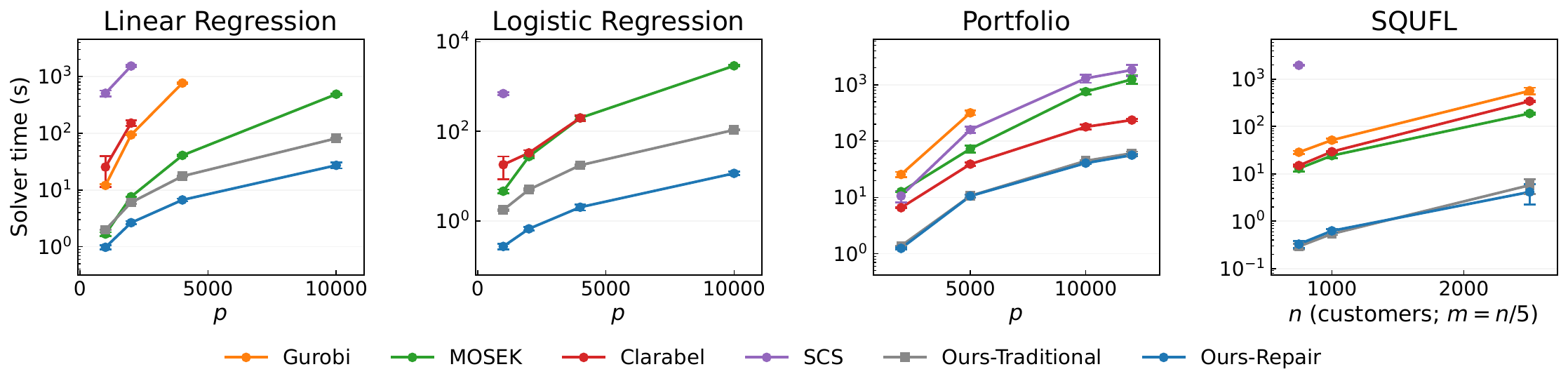}
    \setlength{\abovecaptionskip}{2pt}
    \caption{
        Time (on CPU) to solve a single node relaxation on synthetic instances as a function of problem size.
        Ours$+$Repair can help terminate the algorithm early based on primal-dual gap (left two plots) and does not add much computational overhead (right two plots).
    }
    \label{fig:noderelaxations}

\end{figure*}

\paragraph{Experimental Setup.}
Across synthetic and real-world data, we study (i) sparse linear and logistic regression with at most $k$ nonzeros, box constraints, and affine constraints $\bm C\beta\leq b_{\mathrm{aff}}$~\cite{liu2026gpu}; (ii) separable quadratic uncapacitated facility location (SQUFL), where demand is assigned to open facilities with quadratic costs~\cite{gunluk2007minlp}; and (iii) ridge-regularized sparse mean-variance portfolios with at most $k$ nonzeros and budget constraints, plus a return constraint for market-data instances~\cite{bertsimas2022scalable}. Appendix G.2 describes the data, generation, preprocessing, and hardware.

\subsection{Advantages of Safe Lower and Upper Bounds for Solving Node Relaxations}

As mentioned in the Methodology section, with primal and dual repairs, we can extract safe lower and upper bounds from the node relaxation problem at any iteration.
This is shown in the left plot of Figure~\ref{fig:relaxation-feasibility}, which solves the relaxation of a synthetic sparse linear regression problem with $n=p=1000$.
In the right plot of Figure~\ref{fig:relaxation-feasibility}, we show the empirical convergence of the safe lower bounds across various problem sizes.
The empirical convergence rate is consistent with our Theorem~\ref{theorem:convergence_rate} and the theoretical convergence rate of the residual error $e^t$, which converges sub-linearly globally and often converges linearly locally~\citep{liang2018local}.

For some large-scale problems, safe lower and upper bounds can help us terminate the FOM early as it provides a new stopping criterion -- primal-dual gap.

We next assess scalability by measuring the time required to solve increasingly large node relaxations across all four problem classes. As shown in Figure~\ref{fig:noderelaxations}, the traditional Condat--V\~u is already orders-of-magnitude faster than off-the-shelf first-order and second-order solvers because it avoids linear-system solves as a subroutine.
However, we can run even faster.
For traditional Condat--V\~u method, it only terminates once the residual error is below the tolerance level;
by using the primal-dual gap as an additional stopping criterion, we obtain further run up to one order of magnitude faster on the regression instances.
For problems where this additional stopping criterion is not helpful, the two plots on the right show that the extra costs of computing safe lower and upper bounds are negligible and do not add much overhead.

\begin{table}[!h]
\centering
\setlength{\belowcaptionskip}{6pt}
\caption{CPU time (ms) for 64 conjugate evaluations. Mean $\pm$ sample SD across three seed medians, each from five repetitions. Explicit-inverse setup time is excluded; timed batches include the required products.}
\label{tab:portfolio-dual-scaling}
\normalsize
\setlength{\tabcolsep}{6pt}
\renewcommand{\arraystretch}{1.25}
\begin{tabular*}{0.90\linewidth}{@{\extracolsep{\fill}}lrrrr@{}}
\toprule
$p$ & 1k & 2k & 4k & 10k \\
\midrule
matrix-free & $\mathbf{1.100}\pm0.014$ & $\mathbf{1.469}\pm0.017$ & $\mathbf{2.371}\pm0.019$ & $\mathbf{5.543}\pm0.085$ \\
Explicit $\bW^{-1}$ & $32.932\pm0.468$ & $169.748\pm0.180$ & $678.865\pm25.398$ & $3927.710\pm6.723$ \\
\bottomrule
\end{tabular*}

\end{table}

\subsection{Benefit of Matrix-free Fenchel Conjugate}

We next validate the benefit of our matrix-free method when computing the Fenchel conjugate of the loss function, $f^*$.

\begin{table*}[!t]
\centering
\setlength{\belowcaptionskip}{4pt}
\caption{
    BnB results on solving problems with discrete variables, with a relative gap tolerance of $10^{-4}$.
    TL denotes the 3600\,s time limit; -- denotes an unavailable or unverified value. A gap of $0.00$ denotes attainment of the prescribed tolerance.
}
\label{tab:okglm-bnb-wide}
\fontsize{8.5}{10}\selectfont
\setlength{\tabcolsep}{1.0pt}
\renewcommand{\arraystretch}{1.0}%
\begin{tabular*}{\linewidth}{@{\extracolsep{\fill}}l*{15}{r}@{}}
\toprule
& \multicolumn{3}{c}{Gurobi}
& \multicolumn{3}{c}{MOSEK}
& \multicolumn{3}{c}{ours CPU}
& \multicolumn{3}{c}{ours GPU}
& \multicolumn{3}{c}{ours GPU (Batched)} \\
\cmidrule(lr){2-4} \cmidrule(lr){5-7}
\cmidrule(lr){8-10} \cmidrule(lr){11-13} \cmidrule(lr){14-16}
& $t$ (s) & gap (\%) & nodes
& $t$ (s) & gap (\%) & nodes
& $t$ (s) & gap (\%) & nodes
& $t$ (s) & gap (\%) & nodes
& $t$ (s) & gap (\%) & nodes \\
\midrule
SANTANDER
& TL & -- & --
& TL & 1.69 & 3.0
& TL & 0.063 & 3187.0
& 940.8 & 0.00 & 4653.0
& \textbf{74.2} & 0.00 & 6513.0 \\
DOROTHEA
& TL & 45.58 & 1.0
& TL & -- & --
& 169.2 & 0.00 & 23.0
& 6.9 & 0.00 & 23.0
& \textbf{5.8} & 0.00 & 31.0 \\
YF-27907
& TL & -- & --
& TL & -- & --
& TL & 13.13 & 88.0
& 1830.0 & 0.00 & 1627.0
& \textbf{175.6} & 0.00 & 1633.0 \\
YF-35629
& TL & -- & --
& TL & 13.21 & 17.0
& TL & 9.30 & 266.0
& 2597.4 & 0.00 & 2447.0
& \textbf{248.3} & 0.00 & 2509.0 \\
squfl200-1000
& 1706.0 & 0.00 & 70.0
& 401.6 & 0.00 & 8.0
& 20.4 & 0.00 & 5.7
& 10.2 & 0.00 & 5.7
& \textbf{1.3} & 0.00 & 6.3 \\
squfl500-2500
& TL & 0.32 & 3.3
& 2729.7 & 0.00 & 3.0
& 166.4 & 0.00 & 3.7
& 13.1 & 0.00 & 3.7
& \textbf{1.5} & 0.00 & 4.3 \\
\bottomrule
\end{tabular*}
\end{table*}

We measure this computational saving on portfolio instances by comparing our formula with an explicit-inverse implementation, with a total of 64 conjugate evaluations per instance.
As shown in Table~\ref{tab:portfolio-dual-scaling}, the advantage of our method grows with the problem size: at $p=10{,}000$, our approach took only $5.54$~ms whereas the explicit-inverse implementation took $3.93 \times 10^3$~ms. Thus, once the gradient argument is available, the conjugate can be evaluated cheaply without materializing, factorizing, or inverting the dense quadratic matrix.

\subsection{Comparison with BnB Solvers}
Faster node relaxations do not necessarily imply faster branch-and-bound solves, since total runtime also depends on the search tree.
We next develop a customized BnB solver based on our node relaxation techniques and compare it with state-of-the-art MIP solvers on four real-world and two large synthetic SQUFL instances. The results are shown in Table~\ref{tab:okglm-bnb-wide}.

When all methods are run on CPUs, our method is already 1--2 orders of magnitude faster than Gurobi and MOSEK. For challenging instances where none of the methods can close the optimality gap, our method produces the smallest gap. Furthermore, unlike Gurobi and MOSEK, our method is GPU-friendly as it does not require adding cutting planes nor solving linear systems. The most computationally intensive part is matrix-vector multiplication, which can be easily accelerated on GPUs. When we run our method on GPUs, we can close the optimality gap on all instances within the specified time limit (1h). The running time is one order of magnitude (up to 24.5$\times$) faster than running on CPUs when processing nodes sequentially and achieves another order of magnitude (up to 12.7$\times$) when processing multiple nodes simultaneously in the batched mode.

\subsection{Ablation Study of Feasibility Repairs}

In order to study the individual effects of primal and dual repairs for BnB, we conduct an ablation study by comparing how many iterations it takes to solve node relaxations.
We fix a pre-certified incumbent objective value and evaluate all stopping rules on the same node trajectories.
Each node is solved to convergence, while we record at which iteration either the primal or the dual repair would have triggered the termination of the relaxation, enabling us to isolate the contribution of our two restoring routines.

\begin{minipage}{\linewidth}
The results are shown in Table~\ref{table:repair_ablation}.
Both repairing methods are effective:
dual repair enables pruning nodes early, while primal repair helps to avoid unnecessary iterations of solving node relaxations to high precision.
Together, they provide the strongest performance boost across the test instances.
\end{minipage}

\begin{table}[!htb]
\centering
\setlength{\belowcaptionskip}{6pt}
\caption{
    Average number of iterations per node.
    Parentheses report the iteration count relative to no repair.
}
\normalsize
\setlength{\tabcolsep}{6pt}
\renewcommand{\arraystretch}{1.25}%
\begin{tabular*}{0.90\linewidth}{@{\extracolsep{\fill}}lrrrr@{}}
\toprule
Instance & no repair & primal only & dual only & both \\
\midrule
SANTANDER      & 262.8  & 162.0\,(62\%)  & 117.9\,(45\%)  & \textbf{17.1}\,(7\%) \\
DOROTHEA       & 101.5  & 86.5\,(85\%)   & 21.3\,(21\%)   & \textbf{6.3}\,(6\%) \\
YF-27907       & 5801.6 & 583.3\,(10\%)  & 5355.7\,(92\%) & \textbf{137.5}\,(2\%) \\
YF-35629       & 9424.0 & 1003.4\,(11\%) & 8569.2\,(91\%) & \textbf{148.6}\,(2\%) \\
squfl200-1000  & 64.1   & 50.3\,(78\%)   & 39.8\,(62\%)   & \textbf{26.0}\,(41\%) \\
squfl500-2500  & 86.0   & 64.4\,(75\%)   & 51.0\,(59\%)   & \textbf{29.3}\,(34\%) \\
\bottomrule
\end{tabular*}

\label{table:repair_ablation}
\end{table}

\section{Conclusion}
\vspace{2mm}

We have introduced the first primal-dual repair framework that allows first-order methods to solve node relaxations within branch-and-bound.
By repairing infeasible primal and dual iterates, our method provides safe lower and upper bounds at every iteration, enabling node pruning, incumbent updates, and early termination without requiring the wait of convergence when solving node relaxations.
A matrix-free evaluation of Fenchel conjugates further reduces the computational cost for quadratic and other structured losses.
On synthetic and real-world benchmarks, our method is up to two orders of magnitude faster than commercial MIP solvers, with an additional one to two orders of magnitude speedup on GPUs.
This opens doors to developing new generic MIP solvers based on first-order methods and modern parallel computing hardware.

\setlength{\parskip}{3pt}
\section*{Acknowledgments}

We thank the Department of Computer Science and Engineering at the University of Bologna for providing GPU computing resources, and Prof.~Maurizio Gabbrielli for granting access to the cluster.
We also thank Daniel~Bienstock for his helpful advice on portfolio optimization.
This work used the Delta system at the National Center for Supercomputing Applications through allocation CIS250029 from the Advanced Cyberinfrastructure Coordination Ecosystem: Services \& Support (ACCESS) program, which is supported by National Science Foundation grants \#2138259, \#2138286, \#2138307, \#2137603, and \#2138296.
The authors recognize the Penn State Institute for Computational and Data Sciences (ICDS)  (RRID:SCR\_025154) for providing access to computational research infrastructure (RRID:SCR\_026424).

\clearpage
\bibliographystyle{plainnat}
\ifdefined\RootBuild
  \bibliography{manuscript/references}
\else
  \bibliography{references}
\fi

\clearpage
\thispagestyle{empty}
\onecolumn

\input{content/07_Appendix}

\label{app:instances}

\end{document}